\newtheorem{thm}{Theorem}
\newtheorem{lem}[thm]{Lemma}
\newtheorem{prop}[thm]{Proposition}
\newtheorem{rem}[thm]{Remark}
\begin{document}

\title[Semi-linear Torsional Rigidity on Complete Riemannian 2-Manifold]
{The Semi-linear Torsional Rigidity on a Complete Riemannian
Two-Manifold}

\author{Jie Xiao}
\address{Department of Mathematics and Statistics, Memorial University of Newfoundland,
St. John's, NL A1C 5S7, Canada}
\email{jxiao@mun.ca}

\thanks{The project was supported in part by NSERC of Canada}
\subjclass[2000]{Primary 53A30, 53A05, 31A30} \keywords{semilinear
torsional rigidity, isoperimetry, variation, monotonicity, complete
Riemannian two-manifold, conformal map, Schwarz type lemma}

\begin{abstract}
This note is concerned with some essential properties (optimal
isoperimetry, first variation, and monotonicity formula) of the
so-called $[0,1)\ni\gamma$-torsional rigidity
$\mathcal{T}_{\gamma,\mathsf{g}}$ on a complete Riemannian
two-manifold $(\mathbb M^2,\mathsf{g})$. Even in the special case of
$\mathbb R^2$, major results are new.
\end{abstract}
\maketitle

\section{Introduction}

Throughout this note, on $(\mathbb M^2,\mathsf{g})$ -- a
two-dimensional manifold $\mathbb M^2$ with a complete Riemannian
metric $\mathsf{g}$, we denote by
$$
d_\mathsf{g}(\cdot,\cdot);\ \ \langle\cdot,\cdot\rangle_\mathsf{g};\
\ |\cdot|_{\mathsf{g}};\ \ K_{\mathsf g}(\cdot,\cdot);\ \
dA_{\mathsf g}(\cdot);\ \ dL_{\mathsf g}(\cdot);\ \ \Delta_{\mathsf
g}(\cdot);\ \ \nabla_{\mathsf g}(\cdot),
$$
the distance function; the inner product between two vectors in the
tangent bundle; the norm of a vector; the Gauss curvature; the area
element; the length element; the Laplace-Beltrami operator; the
gradient, respectively. Moreover, $B_\mathsf{g}(o,r)=\{z\in\mathbb
M^2: d_\mathsf{g}(z,o)<r\}$ denotes the geodesic disk centered at
$o$ with radius $r$, and the isoperimetric constant of $(\mathbb
M^2,\mathsf{g})$ is determined by
$$
\tau_{\mathsf g}=\inf_{O\in \mathcal{F}(\mathbb
M^2)}\frac{\big(L_{\mathsf{g}}(\partial
O)\big)^2}{A_{\mathsf{g}}(O)}.
$$
When $\mathbb M^2$ is the flat Euclidean plane $\mathbb R^2$, we
naturally equip it with the standard Euclidean metric $\mathsf{e}$
and therefore the previous notations will be changed
correspondingly, i.e., $\mathsf{g}$ is replaced by $\mathsf{e}$. In
particular, $\tau_\mathsf{e}=4\pi$.

For a parameter $\gamma \in [0,1)$ and a relatively compact domain
$O\subseteq\mathbb M^2$ with $C^\infty$ smooth boundary $\partial
O$, denoted by $O\in \mathcal{F}(\mathbb M^2)$, let $u$ be the
solution of the following semi-linear boundary value problem (see
\cite{Spe}, \cite{Co}, \cite{DrKe}, \cite{DaHeHu}, \cite{CarRa}, and
their related references for the Euclidean case $\mathbb R^2$):
\begin{equation}\label{eq1}
\left\{\begin{array} {r@{\quad\quad}l}
\Delta_{\mathsf{g}} u=-u^\gamma\ \ \&\ \ u>0 & \hbox{in}\quad O;\\
u=0 & \hbox{on}\quad\partial O,
\end{array}
\right.
\end{equation}
where the second identity follows from Green's theorem. Then the
semi-linear (or $\gamma$-) torsional rigidity of $O$ as the cross
section of the cylindrical beam $O\times\mathbb R$ is defined as
\begin{equation*}\label{}
\mathcal{T}_{\gamma,\mathsf{g}}(O)=\int_O\big|\nabla_{\mathsf
g}u\big|_\mathsf{g}^2\,dA_{\mathsf g}=\int_{O}
u^{1+\gamma}\,dA_{\mathsf g}.
\end{equation*}
Note that if $\gamma=0$ then (\ref{eq1}) is just the classical
torsion problem and the resulting $0$-torsional rigidity is
standard. As well-known, under $\gamma=1$ the problem (\ref{eq1})
has more than one non-trivial solutions, and thus the following
eigenvalue problem is instead considered:
\begin{equation}\label{eq1a}
\left\{\begin{array} {r@{\quad\quad}l}
\Delta_{\mathsf{g}} u=-\lambda u\ \ \&\ \ u>0 & \hbox{in}\quad O;\\
u=0 & \hbox{on}\quad\partial O,
\end{array}
\right.
\end{equation}
whose principal (or first) eigenvalue is determined through
$$
\Lambda_{\mathsf{g}}(O):=\inf_{v\in
W^{1,2}_0(O)}\left\{\int_{O}\big|\nabla_\mathsf{g}
v\big|_\mathsf{g}^2\,dA_\mathsf{g}:\quad
\int_{O}v^{2}\,dA_\mathsf{g}=1\right\},
$$
where $W^{1,2}_0(O)$ stands for the Sobolev space of all
compactly-supported $C^\infty$ functions $v$ on $O$ with $v^2$ and
$|\nabla_\mathsf{g} v|^2_\mathsf{g}$ being
$dA_\mathsf{g}$-integrable on $O$.

On the basis of Section \ref{s5} -- a $\gamma$-torsional rigidity
Schwarz's lemma for the conformal mappings on $\mathbb R^2$, we
shall present some fundamental properties of
$\mathcal{T}_{\gamma,\mathsf{g}}$ in: Section \ref{s2} -- the
optimal isoperimetric inequality in terms of $\tau_{\mathsf{g}}$;
Section \ref{s3} -- the first variational formula arising from a
domain deformation; Section \ref{s4} -- the monotonicity for the
$\gamma$-torsional rigidity of a geodesic disk.

\section{Isoperimetry}\label{s2}

Whenever $\mathbb M^2=\mathbb R^2$, a famous problem posed by St.
Venant in 1956 and settled by G. P\'olya in 1948 (cf. \cite[p.
121]{PoSz}) was to prove that among all simply connected domains of
given area, a disk of the area has the largest $0$-torsional
rigidity. Such an isoperimetric result can be naturally extended to
the $\gamma$-torsional rigidity.

\begin{prop}\label{pr2} Given $\gamma\in [0,1)$. Let $(\mathbb M^2,\mathsf{g})$ be a complete Riemannian two-manifold
with $\tau_\mathsf{g}>0$. If $u$ is the solution of (\ref{eq1}) with
$O\in\mathcal{F}(\mathbb M^2)$ being simply-connected, then
\begin{equation}\label{eq2}
\int_O u^{1+\gamma}\,dA_{\mathsf
g}\le\Big(\frac{1+\gamma}{2\tau_\mathsf{g}}\Big)\left(\int_O
u^\gamma\,dA_{\mathsf g}\right)^2,
\end{equation}
equivalently,
\begin{equation}\label{eq22}
\int_O \big|\nabla_\mathsf{g}
u\big|_\mathsf{g}^2\,dA_\mathsf{g}\le\Big(\frac{1+\gamma}{2\tau_\mathsf{g}}\Big)\left(\int_{\partial
O}\big|\nabla_\mathsf{g} u\big|_\mathsf{g}\,dL_{\mathsf g}\right)^2.
\end{equation}
Moreover, if $\mathbb M^2=\mathbb R^2$ and $O=B_\mathsf{g}(o,r)$,
then equality of (\ref{eq2}) or (\ref{eq22}) is valid.
\end{prop}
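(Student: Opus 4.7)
The plan is to combine the coarea formula with the isoperimetric constant $\tau_{\mathsf{g}}$ applied to the super-level sets of $u$. The key observation is that (\ref{eq1}) controls the flux $\int_{\{u=t\}}|\nabla_{\mathsf{g}}u|_{\mathsf{g}}\,dL_{\mathsf{g}}$ through Green's theorem, while the isoperimetric inequality controls the level-curve length from below; combining these via Cauchy--Schwarz on the level curves $\{u=t\}$ will produce a first-order differential inequality whose integration delivers (\ref{eq2}).

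Concretely, I would set $M=\sup_O u$, $\mu(t)=A_{\mathsf{g}}(\{u>t\})$ and $L(t)=L_{\mathsf{g}}(\{u=t\})$ for $t\in(0,M)$, all of which are well defined with $\{u>t\}\in\mathcal{F}(\mathbb M^2)$ for almost every $t$ by Sard's theorem applied to $u$. Applying Green's theorem on $\{u>t\}$ together with (\ref{eq1}) (the outward unit normal on $\{u=t\}$ being $-\nabla_{\mathsf{g}}u/|\nabla_{\mathsf{g}}u|_{\mathsf{g}}$) gives
$$
\phi(t):=\int_{\{u=t\}}|\nabla_{\mathsf{g}}u|_{\mathsf{g}}\,dL_{\mathsf{g}}=\int_{\{u>t\}}u^\gamma\,dA_{\mathsf{g}},
$$
while the coarea formula yields $-\mu'(t)=\int_{\{u=t\}}|\nabla_{\mathsf{g}}u|_{\mathsf{g}}^{-1}\,dL_{\mathsf{g}}$ and hence $\phi'(t)=t^\gamma\mu'(t)$; in particular $\phi(0)=\int_O u^\gamma\,dA_{\mathsf{g}}$ and $\phi(M)=0$.

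Next, Cauchy--Schwarz on $\{u=t\}$ gives $L(t)^2\le\phi(t)\,(-\mu'(t))$, while the very definition of $\tau_{\mathsf{g}}$ gives $L(t)^2\ge\tau_{\mathsf{g}}\mu(t)$. Combining and multiplying by $t^\gamma$ yields $\tau_{\mathsf{g}}t^\gamma\mu(t)\le -\phi'(t)\phi(t)$, so integrating from $0$ to $M$ and using layer-cake gives
$$
\frac{\tau_{\mathsf{g}}}{1+\gamma}\int_O u^{1+\gamma}\,dA_{\mathsf{g}}=\tau_{\mathsf{g}}\int_0^M t^\gamma\mu(t)\,dt\le\tfrac{1}{2}\phi(0)^2=\tfrac{1}{2}\Big(\int_O u^\gamma\,dA_{\mathsf{g}}\Big)^2,
$$
which is (\ref{eq2}). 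The equivalence with (\ref{eq22}) then follows by testing (\ref{eq1}) against $u$ and against $1$: the identities $\int_O|\nabla_{\mathsf{g}}u|_{\mathsf{g}}^2\,dA_{\mathsf{g}}=\int_O u^{1+\gamma}\,dA_{\mathsf{g}}$ and $\int_{\partial O}|\nabla_{\mathsf{g}}u|_{\mathsf{g}}\,dL_{\mathsf{g}}=\int_O u^\gamma\,dA_{\mathsf{g}}$ (the latter using $u=0$ on $\partial O$ with $u>0$ inside, so the inward normal derivative has constant sign) convert (\ref{eq2}) into (\ref{eq22}).

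For the equality case, the radial solution of (\ref{eq1}) on a Euclidean disk has $|\nabla_{\mathsf{e}}u|_{\mathsf{e}}$ constant on each circular level set, so Cauchy--Schwarz becomes an equality; moreover $L(t)^2=4\pi\mu(t)=\tau_{\mathsf{e}}\mu(t)$ saturates the isoperimetric inequality on circles, and tracing the derivation produces equality throughout. I expect the main technical point to be justifying absolute continuity of $\mu$ and $\phi$ on $(0,M)$ with the derivative formulas above; this is standard once critical values of $u$ are avoided via Sard's theorem and the smooth coarea formula, but some care is required near the endpoint $t=M$ where $\mu(M)=0$.
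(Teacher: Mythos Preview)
Your proof is correct and follows essentially the same approach as the paper: coarea formula for $\mu'(t)$, Green's theorem on super-level sets to identify $\phi(t)=\int_{\{u>t\}}u^\gamma\,dA_{\mathsf g}$, Cauchy--Schwarz plus the isoperimetric inequality on level curves, and then integration in $t$ with layer-cake. The only cosmetic difference is that the paper changes variable to $a=\mu(t)$ and integrates by parts, whereas you integrate directly in $t$; the resulting inequality and the treatment of the equivalence and equality case are the same.
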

\begin{proof} Partially inspired by R. Sperb's exposition in \cite[pp.
190-196]{Spe}, we make the following argument.

Given a simply-connected domain $O\in\mathcal{F}(\mathbb M^2)$. For
$0\le t\le S:=\sup_{z\in O}u(z)$ let
$$
O_t=\{z\in O: u(z)>t\};\ \ \partial O_t=\{z\in O: u(z)=t\};\ \
a(t)=A_\mathsf{g}(O_t).
$$
Without loss of generality, we may assume that the set of the
critical points of $u$ is finite. An application of the well-known
co-area formula gives
\begin{equation}\label{1e}
\frac{da(t)}{dt}=-\int_{\partial O_t}\big|\nabla_\mathsf{g}
u\big|^{-1}_\mathsf{g}\,dL_\mathsf{g}.
\end{equation}
Using (\ref{1e}), Cauchy-Schwarz's inequality and
$\tau_\mathsf{g}>0$, we find
\begin{equation}\label{2e}
\tau_\mathsf{g}a(t)\le\big(L_\mathsf{g}(\partial
O_t)\big)^2\le\Big(-\frac{da(t)}{dt}\Big)\int_{\partial
O_t}\big|\nabla_\mathsf{g}u\big|_\mathsf{g}\,dL_\mathsf{g}.
\end{equation}

For convenience, set
$$
I_\gamma(t)=\int_{O_t}u^\gamma\,dA_\mathsf{g}\quad\&\quad
I_{1+\gamma}(t)=\int_{O_t}u^{1+\gamma}\,dA_\mathsf{g}.
$$
Then, using the layer-cake formula, the integration-by-part and
(\ref{1e}), we get
$$
I_\gamma(t)=\int_t^{S}\Big(\int_{\partial O_s}\big|\nabla_\mathsf{g}
u\big|^{-1}_\mathsf{g}\,dL_\mathsf{g}\Big)s^\gamma\,ds,
$$
whence finding
$$
\frac{dI_\gamma(t)}{dt}=-t^\gamma \int_{\partial
O_t}\big|\nabla_\mathsf{g}
u\big|^{-1}_\mathsf{g}\,dL_\mathsf{g}=t^\gamma\Big(\frac{da(t)}{dt}\Big)
$$
and so
\begin{equation}\label{e2e}
\frac{dI_\gamma(t)}{da(t)}=t^\gamma.
\end{equation}

On the other hand, an application of (\ref{2e}), Green's formula,
(\ref{eq1}), and $\tau_\mathsf{g}>0$ implies
\begin{equation}\label{e3e}
I_\gamma(t)=-\int_{O_t}\Delta_\mathsf{g}u\,dA_\mathsf{g}=\int_{\partial
O_t}\big|\nabla_\mathsf{g} u\big|_\mathsf{g}\,dL_\mathsf{g}\ge
\tau_\mathsf{g}a(t)\Big(-\frac{dt}{da(t)}\Big).
\end{equation}
By (\ref{e2e})-(\ref{e3e}) we obtain
\begin{equation}\label{yy}
I_\gamma(t)\Big(\frac{dI_\gamma(t)}{da(t)}\Big)+\tau_\mathsf{g}t^\gamma
a(t)\Big(\frac{dt}{da(t)}\Big)\ge 0.
\end{equation}
Now, choosing $a=a(t)$ as an independent variable, we get $A=a(0)$
and $0=a(S)$. Then, integrating (\ref{yy}) over the interval
$(0,A)$, taking an integration-by-part, and using (\ref{1e}) once
again, as well as the layer-cake formula, we achieve
\begin{eqnarray*}
0&\le& \int_0^A
\Big(\frac{dI_\gamma}{da}\Big)I_\gamma\,da+\tau_\mathsf{g}\int_0^A a
t^\gamma\Big(\frac{dt}{da}\Big)\,da\\
&=&2^{-1}\int_0^A
dI_\gamma^2-\Big(\frac{\tau_\mathsf{g}}{1+\gamma}\Big)\int_0^A
t^{1+\gamma}\,da\\
&=&
2^{-1}\big(I_\gamma(0)\big)^2-\Big(\frac{\tau_\mathsf{g}}{1+\gamma}\Big)\int_0^{S}
t^{1+\gamma}\Big(\int_{\partial O_t}\big|\nabla_\mathsf{g} u\big|_\mathsf{g}^{-1}\,dL_\mathsf{g}\Big)\,dt\\
&=&2^{-1}\big(I_\gamma(0)\big)^2-\Big(\frac{\tau_\mathsf{g}}{1+\gamma}\Big)I_{1+\gamma}(0),
\end{eqnarray*}
thereby finding (\ref{eq2}) right away.

Clearly, (\ref{eq22}) follows from (\ref{eq2}) and
$$
\int_O
u^\gamma\,dA_\mathsf{g}=-\int_O\Delta_\mathsf{g}u\,dA_\mathsf{g}=-\int_{\partial
O}\frac{\partial u}{\partial\nu}\,dL_\mathsf{g}=\int_{\partial
O}|\nabla_\mathsf{g} u|\,dL_\mathsf{g}
$$
in which the Green formula has been used and $\partial/\partial\nu$
represents the partial derivative along the unit outward normal to
the boundary $\partial O$.

The equality case of (\ref{eq2}) or (\ref{eq22}) under $\mathbb
M^2=\mathbb R^2$ and $O=B_\mathsf{e}(o,r)$ (the origin-centered disk
of radius $r$) can be verified via a direct calculation with the
radial solution $u$ (cf. \cite{DaHeHu}) to
\begin{equation*}\label{}
\left\{\begin{array} {r@{\quad\quad}l}
\Delta_{\mathsf{e}} u=-\kappa_\gamma u^\gamma\ \ \&\ \ u>0\ \ \hbox{in}\ \ B_\mathsf{e}(o,r);\\
u|_{\partial B_\mathsf{e}(o,r)}=0\ \ \hbox{and}\ \
\int_{B_\mathsf{e}(o,r)}u^{1+\gamma}\,dA_\mathsf{e}=1,
\end{array}
\right.
\end{equation*}
where
$$
\kappa_\gamma:=\inf_{v\in
W^{1,2}_0(B_\mathsf{e}(o,r))}\left\{\int_{B_\mathsf{e}(o,r)}\big|\nabla_\mathsf{e}
v\big|_\mathsf{e}^2\,dA_\mathsf{e}:\quad
\int_{B_\mathsf{e}(o,r)}|v|_\mathsf{e}^{1+\gamma}\,dA_\mathsf{e}=1\right\}.
$$
\end{proof}

\begin{rem}\label{re3} Under the same hypothesis on $(\mathbb M^2,\mathsf{g})$ as Proposition \ref{pr2}, we can
discover two interesting facts:

\item{\rm (i)} If $\gamma=0$, $K_\mathsf{g}\ge 0$, and
$$
\inf_{(o,r)\in\mathbb M^2\times
(0,\infty)}\frac{2\tau_\mathsf{g}\mathcal{T}_{0,\mathsf{g}}\big(B_\mathsf{g}(o,r)\big)}
{(\pi r^2)^2}\ge 1
$$
which, plus the special case $\gamma=0$ of (\ref{eq2}), implies
$$
\inf_{(o,r)\in\mathbb M^2\times (0,\infty)}\frac{A_{\mathsf
g}\big(B_\mathsf{g}(o,r)\big)}{\pi r^2}\ge 1,
$$
then $\mathbb M^2$ is isometric to $\mathbb R^2$ due to E. Hebey's
explanation on \cite[p. 244]{He}.

\item{\rm (ii)} When $\gamma=1$, the corresponding formulation of (\ref{eq2}) (cf. \cite[p. 195,
(11.24)]{Spe} for $\mathbb M^2=\mathbb R^2$) is: if $u$ denotes the
Laplace-Beltrami eigenfunction associated to
$\Lambda_\mathsf{g}(O)$, then

\begin{equation}\label{eq2a}
\int_O
u^2\,dA_\mathsf{g}\le\frac{\Lambda_\mathsf{g}(O)}{\tau_\mathsf{g}}\left(\int_O
u\,dA_{\mathsf g}\right)^2,
\end{equation}
amounting to,
\begin{equation}\label{eq2b}
\int_O \big|\nabla
u\big|_\mathsf{g}^2\,dA_\mathsf{g}\le\frac{1}{\tau_\mathsf{g}}\left(\int_{\partial
O}\big|\nabla_\mathsf{g} u\big|_\mathsf{g}\,dL_{\mathsf g}\right)^2.
\end{equation}
Moreover, equality in (\ref{eq2a}) or (\ref{eq2b}) holds for
$\mathbb M^2=\mathbb R^2$ and $O=B_\mathsf{e}(o,r)$.
\end{rem}

\section{Variation}\label{s3}

Following the first variation formula of the principal eigenvalue
(i.e., $\gamma=1$) discovered in P. R. Garabedian and M. Schiffer
\cite{GS} when $\mathbb M^2=\mathbb R^2$ and in A. El Soufi and S.
Ilias \cite{EI} for the general setting which was reformulated by F.
Pacard and P. Sicbaldi in \cite[Proposition 2.1]{PaSi}, we can
establish an extension from $\Lambda_\mathsf{g}$ to
$\mathcal{T}_{\gamma,\mathsf{g}}$ with $\gamma\in [0,1)$.

\begin{prop}\label{pr3} Let $\gamma\in [0,1)$ and $(\mathbb M^2,\mathsf{g})$ be a complete Riemannian two-manifold. For a given time interval $|t|<t_0$
suppose that $O_t=\xi(t,O_0)$ is the flow on a domain
$O_0\in\mathcal{F}(\mathbb M^2)$ associated to the vector field
$\Xi(t,z)$, i.e,
\begin{equation}\label{e3}
\left\{\begin{array} {r@{\quad\quad}l}
\partial_t(t,z)=\Xi\big(\xi(t,z)\big);\\
\xi(0,z)=z\in O_0.
\end{array}
\right.
\end{equation}
If $u_t$ is the solution of (\ref{eq1}) with $O$ replaced by $O_t$
and $\nu_t$ is the unit outward normal vector field to $\partial
O_t$, then
\begin{equation}\label{e4}
\frac{d}{dt}\mathcal{T}_{\gamma,\mathsf{g}}(O_t)\Big|_{t=0}=\Big(\frac{1+\gamma}{1-\gamma}\Big)\int_{\partial
O_0}\langle\nabla_\mathsf{g}
u_0,\nu_0\rangle^2_\mathsf{g}\langle\nabla_\mathsf{g}
\Xi,\nu_0\rangle_\mathsf{g}\,dL_\mathsf{g}.
\end{equation}
\end{prop}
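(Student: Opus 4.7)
The plan is to combine the Reynolds/Hadamard transport formula with the linearized equation satisfied by the shape derivative and two applications of Green's identity; the interplay between these two integrations by parts is exactly what produces the ratio $(1+\gamma)/(1-\gamma)$ and explains the restriction $\gamma<1$.

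Denote by $\dot u:=\partial_t u_t|_{t=0}$ the Eulerian shape derivative. Because $u_0\equiv 0$ on $\partial O_0$, the boundary contribution in the Hadamard formula for $t\mapsto\int_{O_t}u_t^{1+\gamma}\,dA_\mathsf{g}$ vanishes, and one is reduced to
\[
\frac{d}{dt}\mathcal{T}_{\gamma,\mathsf{g}}(O_t)\Big|_{t=0}=(1+\gamma)\int_{O_0}u_0^{\gamma}\,\dot u\,dA_\mathsf{g}.
\]
I would then derive the linearized problem for $\dot u$ by differentiating (\ref{eq1}) in $t$ at $t=0$, obtaining $\Delta_\mathsf{g}\dot u=-\gamma u_0^{\gamma-1}\dot u$ in $O_0$. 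Differentiating $u_t(\xi(t,\cdot))\equiv 0$ on $\partial O_0$ and using that $\partial O_0$ is a level set of $u_0$ (so $\nabla_\mathsf{g}u_0=\langle\nabla_\mathsf{g}u_0,\nu_0\rangle_\mathsf{g}\,\nu_0$ on $\partial O_0$) yields the Dirichlet boundary datum $\dot u=-\langle\nabla_\mathsf{g}u_0,\nu_0\rangle_\mathsf{g}\langle\Xi,\nu_0\rangle_\mathsf{g}$ on $\partial O_0$.

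The heart of the argument is a double integration by parts. Using $u_0^{\gamma}=-\Delta_\mathsf{g}u_0$ once gives
\[
\int_{O_0}u_0^{\gamma}\dot u\,dA_\mathsf{g}=\int_{O_0}\langle\nabla_\mathsf{g}u_0,\nabla_\mathsf{g}\dot u\rangle_\mathsf{g}\,dA_\mathsf{g}-\int_{\partial O_0}\dot u\,\langle\nabla_\mathsf{g}u_0,\nu_0\rangle_\mathsf{g}\,dL_\mathsf{g},
\]
while a second application, together with $u_0|_{\partial O_0}=0$ and the linearized equation, produces the identity $\int_{O_0}\langle\nabla_\mathsf{g}u_0,\nabla_\mathsf{g}\dot u\rangle_\mathsf{g}\,dA_\mathsf{g}=\gamma\int_{O_0}u_0^{\gamma}\dot u\,dA_\mathsf{g}$. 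Combining these with the boundary formula for $\dot u$ gives
\[
(1-\gamma)\int_{O_0}u_0^{\gamma}\dot u\,dA_\mathsf{g}=\int_{\partial O_0}\langle\nabla_\mathsf{g}u_0,\nu_0\rangle_\mathsf{g}^{2}\langle\Xi,\nu_0\rangle_\mathsf{g}\,dL_\mathsf{g},
\]
and substitution into the preceding displayed identity yields (\ref{e4}).

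The main obstacle I anticipate is the rigorous justification of $\dot u$ itself: one must argue that $t\mapsto u_t$ is differentiable in a function space regular enough for $\dot u$ to solve the linearized problem classically and for the transport formula to apply. I would handle this in the standard way by pulling the whole family back to the fixed domain $O_0$ via the diffeomorphism $\xi(t,\cdot)$, solving the resulting $t$-dependent elliptic problem in an appropriate Sobolev scale, and invoking elliptic regularity up to the $C^\infty$ boundary. Once this is in place, the Green-identity step is routine, and the factor $1/(1-\gamma)$ picked up there is precisely the algebraic reason the formula fails at $\gamma=1$ and is restricted to $\gamma\in[0,1)$.
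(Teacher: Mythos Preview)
Your proposal is correct and follows essentially the same route as the paper: derive the boundary datum and the linearized PDE for $\dot u=\partial_t u_t|_{t=0}$, then reduce $(1+\gamma)\int_{O_0}u_0^{\gamma}\dot u\,dA_\mathsf{g}$ to a boundary integral via Green's identities. The only cosmetic difference is that the paper applies Green's second identity once to the antisymmetric expression $\dot u\,\Delta_\mathsf{g}u_0-u_0\,\Delta_\mathsf{g}\dot u$ (which equals $(\gamma-1)u_0^{\gamma}\dot u$ by the two PDEs) rather than splitting it into two uses of Green's first identity; your extra care about the differentiability of $t\mapsto u_t$ goes beyond what the paper makes explicit.
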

\begin{proof} Note that $u_t\big(\xi(t,z)\big)=0$ holds for any
$z\in \partial O_0$. So, a differentiation with respect to $t=0$
gives $\partial_t
u_0\big|_{t=0}=-\langle\nabla_\mathsf{g}u_0,\Xi\rangle_\mathsf{g}$
on $\partial O_0$. Because $u_0$ vanishes on $\partial O_0$, only
the normal component of $\Xi$ plays a role in the last formula. As a
result, one gets
\begin{equation}\label{e5}
\partial_t
u_0\big|_{t=0}=-\langle\nabla_\mathsf{g}u_0,\nu_0\rangle_\mathsf{g}=\langle\Xi,\nu_0\rangle_\mathsf{g}\quad\hbox{on}\quad\partial
O_0.
\end{equation}

Next, since $-\Delta_\mathsf{g}u_t=u_t^\gamma$ holds in $O_t$,
taking the partial derivative of this last equation at $t=0$ yields
\begin{equation}\label{e6}
0=\Delta_\mathsf{g}\partial_{t} u_0\big|_{t=0}+\gamma
u_0^{\gamma-1}\partial_t u_0\big|_{t=0}\quad\hbox{in}\quad O_0.
\end{equation}

Now, an application of the definition of
$\mathcal{T}_{\gamma,\mathsf{g}}(O_t)$, (\ref{e5}), (\ref{e6}),
(\ref{eq1}) with $O_0$, and Green's formula derives
\begin{eqnarray*}
\frac{d}{dt}\mathcal{T}_{\gamma,\mathsf{g}}(O_t)\Big|_{t=0}&=&({\gamma+1})\int_{O_0}u^\gamma\partial_t
u_0\big|_{t=0}\,dA_\mathsf{g}\\
&=&\Big(\frac{\gamma+1}{\gamma-1}\Big)\int_{O_0}\Big(\partial_t
u_0\big|_{t=0}\Delta_\mathsf{g}u_0-u_0\Delta_\mathsf{g}\partial_t
u_0\big|_{t=0}\Big)\,dA_\mathsf{g}\\
&=&\Big(\frac{1+\gamma}{1-\gamma}\Big)\int_{\partial
O_0}\langle\nabla_\mathsf{g}
u_0,\nu_0\rangle^2_\mathsf{g}\langle\nabla_\mathsf{g}
\Xi,\nu_0\rangle_\mathsf{g}\,dL_\mathsf{g}.
\end{eqnarray*}
Finally, (\ref{e4}) follows.
\end{proof}

\begin{rem} Two comments are in order:

\item{\rm (i)} Under $\mathbb M^2=\mathbb R^2$ and $\gamma=0$, an early form of (\ref{e4}) was established by J. Hadamard
\cite{Had} (cf. \cite{Her}), but also a convex-body-based variant of
(\ref{e4}) was stated in A. Colesanti \cite[Proposition 18]{Co}.

\item{\rm (ii)} Clearly, (\ref{e4}) does not allow $\gamma=1$ whose
corresponding formula for the principal eigenvalue is the following:
(cf. \cite[Proposition 2.1]{PaSi}):
\begin{equation}\label{eLast}
\frac{d}{dt}\Lambda_{\mathsf{g}}(O_t)\Big|_{t=0}=-\int_{\partial
O_0}\langle\nabla_\mathsf{g}
u_0,\nu_0\rangle_\mathsf{g}^2\langle\nabla_\mathsf{g}
\Xi,\nu_0\rangle_\mathsf{g}\,dL_\mathsf{g}.
\end{equation}
Of course, $O_t$ in (\ref{eLast}) is generated by the solution $u_t$
of (\ref{eq1a}) with $\lambda$ replaced by
$\Lambda_{\mathsf{g}}(O_t)$.
\end{rem}

\section{Monotonicity}\label{s4}

According to \cite[p. 132]{Co}, we have that if $\mathbb M^2=\mathbb
R^2$, $\mathsf{g}=\mathsf{e}$, and $O$ is a convex domain containing
the origin in its interior, then
$v_r(z)=r^\frac{2}{1-\gamma}u(r^{-1}z)$ solves (\ref{eq1}) with $O$
replaced by its $r$-dilation $rO$ and hence
\begin{equation}\label{an}
\mathcal{T}_{\gamma,\mathsf{e}}(rO)=\int_{rO}|\nabla_\mathsf{e}v|_\mathsf{e}^2\,dA_\mathsf{e}=r^\frac{4}{1-\gamma}\int_O|\nabla_\mathsf{e}u|_\mathsf{e}^2\,dA_\mathsf{e}=r^\frac{4}{1-\gamma}
\mathcal{T}_{\gamma,\mathsf{e}}(O).
\end{equation}
This observation leads to the following monotonicity formula for the
$\gamma$-torsional rigidity of a geodesic disk.

\begin{prop}\label{pr3a} Given $\gamma\in [0,1)$. Let $(\mathbb M^2,\mathsf{g})$ be a complete Riemannian two-manifold with $K_\mathsf{g}\ge 0$ and
$\tau_\mathsf{g}>0$. If $o\in\mathbb M^2$ is fixed, then
$$
r\mapsto\mathcal{Q}_{\gamma,\mathsf{g}}(o,r)
:=\frac{\mathcal{T}_{\gamma,\mathsf{g}}(B_\mathsf{g}(o,r))}{r^\frac{\tau_\mathsf{g}}{\pi(1-\gamma)}}
$$
is monotone increasing in $(0,\infty)$. Consequently,
$$
\lim_{r\downarrow
0}\mathcal{Q}_{\gamma,\mathsf{g}}(o,r)\le\mathcal{Q}_{\gamma,\mathsf{g}}(o,r)\le\lim_{r\uparrow\infty}\mathcal{Q}_{\gamma,\mathsf{g}}(o,r)\quad\forall\quad
r\in (0,\infty)
$$
holds with equalities for $\mathbb M^2=\mathbb R^2$.
\end{prop}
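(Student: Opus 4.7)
The plan is to differentiate $r\mapsto\mathcal{T}_{\gamma,\mathsf{g}}(B_\mathsf{g}(o,r))$ using the first variation formula (\ref{e4}), bound the derivative from below by combining Cauchy--Schwarz with the isoperimetric inequality (\ref{eq22}), and then use the length comparison $L_\mathsf{g}(\partial B_\mathsf{g}(o,r))\le 2\pi r$ afforded by $K_\mathsf{g}\ge 0$. The outcome is a logarithmic-derivative inequality that integrates at once to the claimed monotonicity of $\mathcal{Q}_{\gamma,\mathsf{g}}(o,\cdot)$, after which the sandwich and the equality case are immediate.

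I would take $\Xi$ to be the outward unit radial geodesic vector field issuing from $o$, whose flow sends $B_\mathsf{g}(o,r)$ to $B_\mathsf{g}(o,r+t)$, so $\Xi=\nu_r$ on $\partial B_\mathsf{g}(o,r)$. Since $u_r\equiv 0$ on the boundary, $\langle\nabla_\mathsf{g} u_r,\nu_r\rangle_\mathsf{g}^2=|\nabla_\mathsf{g}u_r|_\mathsf{g}^2$ there, and reading (\ref{e4}) as the standard Hadamard identity with $\langle\Xi,\nu_0\rangle_\mathsf{g}\equiv 1$ reduces it to
$$
\frac{d}{dr}\mathcal{T}_{\gamma,\mathsf{g}}(B_\mathsf{g}(o,r))=\Big(\frac{1+\gamma}{1-\gamma}\Big)\int_{\partial B_\mathsf{g}(o,r)}|\nabla_\mathsf{g}u_r|_\mathsf{g}^2\,dL_\mathsf{g}.
$$
Cauchy--Schwarz gives
$$
\Big(\int_{\partial B_\mathsf{g}(o,r)}|\nabla_\mathsf{g} u_r|_\mathsf{g}\,dL_\mathsf{g}\Big)^2\le L_\mathsf{g}(\partial B_\mathsf{g}(o,r))\int_{\partial B_\mathsf{g}(o,r)}|\nabla_\mathsf{g} u_r|_\mathsf{g}^2\,dL_\mathsf{g},
$$
while (\ref{eq22}) bounds the left side below by $\frac{2\tau_\mathsf{g}}{1+\gamma}\mathcal{T}_{\gamma,\mathsf{g}}(B_\mathsf{g}(o,r))$. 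Combining the two,
$$
\frac{d}{dr}\log\mathcal{T}_{\gamma,\mathsf{g}}(B_\mathsf{g}(o,r))\ge\frac{2\tau_\mathsf{g}}{(1-\gamma)\,L_\mathsf{g}(\partial B_\mathsf{g}(o,r))}.
$$

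The hypothesis $K_\mathsf{g}\ge 0$ supplies, via Gauss--Bonnet inside the cut locus together with Bishop's length comparison past it, the bound $L_\mathsf{g}(\partial B_\mathsf{g}(o,r))\le 2\pi r$. Substituting,
$$
\frac{d}{dr}\log\mathcal{T}_{\gamma,\mathsf{g}}(B_\mathsf{g}(o,r))\ge\frac{\tau_\mathsf{g}}{\pi(1-\gamma)\,r}=\frac{d}{dr}\log r^{\tau_\mathsf{g}/(\pi(1-\gamma))},
$$
which is exactly the monotonicity of $\mathcal{Q}_{\gamma,\mathsf{g}}(o,\cdot)$; the sandwich inequality is then an immediate consequence of monotonicity and the existence of the one-sided limits (possibly $\pm\infty$). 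For $\mathbb M^2=\mathbb R^2$ one has $\tau_\mathsf{e}=4\pi$, making the exponent equal to $4/(1-\gamma)$, and the scaling identity (\ref{an}) applied to $O=B_\mathsf{e}(o,1)$ yields $\mathcal{T}_{\gamma,\mathsf{e}}(B_\mathsf{e}(o,r))=r^{4/(1-\gamma)}\mathcal{T}_{\gamma,\mathsf{e}}(B_\mathsf{e}(o,1))$; hence $\mathcal{Q}_{\gamma,\mathsf{e}}(o,\cdot)$ is constant and both sandwich inequalities become equalities.

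The principal obstacle is regularity. Proposition \ref{pr3} requires a $C^\infty$ boundary, and the geodesic sphere $\partial B_\mathsf{g}(o,r)$ has this only for $r$ below the focal distance from $o$, and more generally only for almost every $r$. One must therefore apply the variation formula at such generic $r$, integrate the resulting differential inequality, and invoke absolute continuity of $r\mapsto\mathcal{T}_{\gamma,\mathsf{g}}(B_\mathsf{g}(o,r))$ together with the validity of Bishop's length bound (interpreted as $1$-dimensional Hausdorff measure) past the cut locus before passing to all $r\in(0,\infty)$.
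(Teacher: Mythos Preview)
Your argument is correct and follows the same route as the paper: compute $\frac{d}{dr}\mathcal{T}_{\gamma,\mathsf{g}}(B_\mathsf{g}(o,r))$ via the first-variation formula (\ref{e4}) applied to the radial flow, bound $\mathcal{T}_{\gamma,\mathsf{g}}$ above by combining (\ref{eq22}) with Cauchy--Schwarz and the Bishop--Gromov length bound $L_\mathsf{g}(\partial B_\mathsf{g}(o,r))\le 2\pi r$, and conclude that the logarithmic derivative of $\mathcal{T}_{\gamma,\mathsf{g}}$ dominates $\tau_\mathsf{g}/(\pi(1-\gamma)r)$. Your closing remark on the smoothness of geodesic spheres is a legitimate technical caveat that the paper itself passes over, and your explicit verification of the $\mathbb R^2$ equality case via (\ref{an}) is a welcome detail.
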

\begin{proof} Suppose that $u$ is the solution of (\ref{eq1}) with
$O=B_\mathsf{g}(o,r)$. Since $K_\mathsf{g}\ge 0$, a generalized
version of the well-known Bishop-Gromov comparison theorem (cf.
\cite[p. 41, Theorem 2.14]{Pi}) yields
\begin{equation}\label{e33a}
\frac{d}{dr}\Big(r^{-1}{L_\mathsf{g}\big(\partial
B_\mathsf{g}(o,r)\big)}\Big)\le 0\quad\&\quad
L_\mathsf{g}\big(\partial B_\mathsf{g}(o,r)\big)\le 2\pi r.
\end{equation}
Applying $\tau_\mathsf{g}>0$, (\ref{eq22}), Green's formula,
Cauchy-Schwarz's inequality, and (\ref{e33a}), we get
\begin{eqnarray}\label{e33b}
\mathcal{T}_{\gamma,\mathsf{g}}\big(B_\mathsf{g}(o,r)\big)&\le&\Big(\frac{1+\gamma}{2\tau_\mathsf{g}}\Big)\Big(\int_{\partial
B_\mathsf{g}(o,r)}|\nabla_\mathsf{g}
u|_\mathsf{g}\,dL_\mathsf{g}\Big)^2\nonumber\\
&\le&\Big(\frac{1+\gamma}{2\tau_\mathsf{g}}\Big)L_\mathsf{g}\big(\partial
B_\mathsf{g}(o,r)\big)\int_{\partial
B_\mathsf{g}(o,r)}|\nabla_\mathsf{g}
u|^2_\mathsf{g}\,dL_\mathsf{g}\\
&\le&\Big(\frac{1+\gamma}{(\pi
r)^{-1}\tau_\mathsf{g}}\Big)\int_{\partial
B_\mathsf{g}(o,r)}|\nabla_\mathsf{g}
u|^2_\mathsf{g}\,dL_\mathsf{g}\nonumber
\end{eqnarray}

On the other hand, consider a vector field induced by a normal shift
$\delta\nu$, counted positively in the direction of the outward
normal to $\partial B_\mathsf{g}(o,r)$. More precisely, for $t>-r$
and $z\in\partial B_\mathsf{g}(o,r)$ let $\xi=\xi(t,z)$ be the point
on the geodesic radius starting at $o$ of $B_\mathsf{g}(o,r)$ with
$d_\mathsf{g}(o,\xi)=(1+tr^{-1})d_\mathsf{g}(o,z)$. Consequently, if
$B_\mathsf{g}(o,r)$ is chosen as the initial domain $O_0$ in
Proposition \ref{pr3}, then
$$
\xi(0, B_\mathsf{g}(o,r))=O_0\quad\&\quad \xi(t,
B_\mathsf{g}(o,r))=O_t=B_\mathsf{g}(o,r+t).
$$
Once setting $\Xi(\xi(t,z))$ be the point on the geodesic (radial)
direction from $o$ to $\xi(t,z)$ with
$(r+t)^{-1}d_\mathsf{g}(o,\xi)$ as its distance from $o$, we see
that (\ref{e3}) holds. Obviously, the unit outward normal vector to
the boundary $\partial O_0$ at $\xi\in\partial O_0$ is the unit
vector formed by $\xi$ and so equal to $\Xi(\xi)$. Suppose now that
$u$ is the solution of (\ref{eq1}) with $O=B_\mathsf{g}(o,r)$. Then,
an application of (\ref{e4}) gives
\begin{equation}\label{e33c}
\frac{d}{dr}\mathcal{T}_{\gamma,\mathsf{g}}\big(B_\mathsf{g}(o,r)\big)=\Big(\frac{1+\gamma}{1-\gamma}\Big)\int_{\partial
B_\mathsf{g}(o,r)}\big|\nabla_\mathsf{g}u\big|_\mathsf{g}\,dL_\mathsf{g}.
\end{equation}

Next, we employ (\ref{e33b}) and (\ref{e33c}) to achieve
$$
\frac{d}{dr}\mathcal{Q}_{\gamma,\mathsf{g}}(r)=\frac{r\frac{d}{dr}\mathcal{T}_{\gamma,\mathsf{g}}\big(B_\mathsf{g}(o,r)\big)
-\big(\frac{\tau_\mathsf{g}}{\pi(1-\gamma)}\big)\mathcal{T}_{\gamma,\mathsf{g}}\big(B_\mathsf{g}(o,r)\big)}{r^{1-\frac{\tau_\mathsf{g}}{\pi(1-\gamma)}}}\ge
0,
$$
thereby reaching the desired monotonicity. Of course, the
consequence part is immediate.
\end{proof}

\begin{rem} When $\gamma=1$, by (\ref{eLast}) and the foregoing proof we can
establish that under the same hypothesis on $(\mathbb
M^2,\mathsf{g})$ as in Proposition \ref{pr3a},
$$
r\mapsto
\mathcal{Q}_{\mathsf{g}}(o,r):=\frac{\Lambda_{\mathsf{g}}\big(B_\mathsf{g}(o,r)\big)}{r^{-\frac{\tau_\mathsf{g}}{2\pi}}}
$$
is monotone decreasing in $(0,\infty)$. Consequently,
$$
\lim_{r\uparrow
\infty}\mathcal{Q}_{\mathsf{g}}(o,r)\le\mathcal{Q}_{\mathsf{g}}(o,r)\le\lim_{r\downarrow
0}\mathcal{Q}_{\mathsf{g}}(o,r)\quad\forall\quad r\in (0,\infty)
$$
holds with equalities for $\mathbb M^2=\mathbb R^2$ -- this follows
immediately from the well-known fact (see e.g. \cite[p. 110]{Co})
that $\Lambda_{\mathsf{e}}$ is homogeneous of order $-2$.
\end{rem}

\section{Appendix}\label{s5}

In their 2008 paper \cite{BMMPR}, R. Burckel, D. Marshall, D. Minda,
P. Poggi-Corradini and T. Ransford discovered the
area-capacity-diameter versions of the following Schwarz's lemma
variant: For a holomorphic map $f$ from the origin-centered unit
disk $B_\mathsf{e}(o,1)$ into $\mathbb R^2$,
$$
r\mapsto \frac{\sup_{z\in
B_\mathsf{e}(o,r)}|f(z)-f(o)|_\mathsf{e}}{r}
$$
is strictly increasing in $(0,1)$ unless $f$ is linear. Soon after,
their results were extended differently by A. Y. Solynin \cite{Sol},
D. Betsakos \cite{Bet1}-\cite{Bet2}, and J. Xiao and K. Zhu
\cite{XZ}. While, as a new complement to \cite{BMMPR}, T. Carroll
and J. Ratzkin's 2010 article \cite{CarRa} on the Schwarz type lemma
for $\Lambda_\mathsf{e}$ has partially stimulated us to carry out
our current project. In contrast to the
monotone-decreasing-principle (i.e., the backward Schwarz type
lemma) in \cite{CarRa} saying that
$$
r\mapsto\frac{\Lambda_\mathsf{e}\big(f(B_\mathsf{e}(o,r))\big)}{\Lambda_\mathsf{e}(B_\mathsf{e}(o,r))}
$$
is strictly decreasing in $(0,1)$ unless $f$ is a linear map, we
have the forward Schwarz type lemma for the $\gamma$-torsional
rigidity:

\begin{lem}\label{pr4} Given $\gamma\in [0,1)$. If $f$ is a conformal mapping from $B_\mathsf{e}(o,1)$ into $\mathbb R^2$, then
$$
r\mapsto
\Phi_{\gamma,\mathsf{e}}(f;r):=\frac{\mathcal{T}_{\gamma,\mathsf{e}}\big(f(B_\mathsf{e}(o,r))\big)}{\mathcal{T}_{\gamma,\mathsf{e}}(B_\mathsf{e}(o,r)}
$$
is strictly increasing in $(0,1)$ unless $f$ is linear.
Consequently,
$$
\lim_{r\downarrow 0} \Phi_{\gamma,\mathsf{e}}(f;r)\le
\Phi_{\gamma,\mathsf{e}}(f;r)\le\lim_{r\uparrow 1}
\Phi_{\gamma,\mathsf{e}}(f;r)\quad\forall\quad r\in (0,1)
$$
holds with equalities when $f$ is linear.
\end{lem}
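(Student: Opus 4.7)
The plan is to reduce the desired monotonicity to a Hardy-Pohozaev bound on the source disk via Proposition \ref{pr3}, and then to derive that bound directly by chaining Proposition \ref{pr2} with Cauchy-Schwarz on the boundary circle.

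Set $\Omega_r := f(B_\mathsf{e}(o,r))$ and pull back the torsion function $u_r$ on $\Omega_r$ to $v_r := u_r\circ f$ on $B_\mathsf{e}(o,r)$. Since $f$ is univalent holomorphic with $f'$ nowhere-vanishing, $v_r$ solves $-\Delta_\mathsf{e} v_r = |f'|^2 v_r^\gamma$ with $v_r|_{\partial}=0$, and conformal invariance of the Dirichlet integral together with the boundary change $dL_w = |f'|\,dL_\zeta$ yield
\[\mathcal{T}_{\gamma,\mathsf{e}}(\Omega_r) = \int_{B_\mathsf{e}(o,r)}|\nabla_\mathsf{e} v_r|^2_\mathsf{e}\,dA_\mathsf{e},\quad \int_{\partial\Omega_r}|\nabla_\mathsf{e} u_r|_\mathsf{e}\,dL_\mathsf{e} = \int_{\partial B_\mathsf{e}(o,r)}|\nabla_\mathsf{e} v_r|_\mathsf{e}\,dL_\mathsf{e}.\]
The flow $\xi(t,f(\zeta)) = f((1+t/r)\zeta)$ on $\Omega_r$ has boundary normal velocity $\langle\Xi,\nu_r\rangle_\mathsf{e} = |f'\circ f^{-1}|$ (by conformality), so Proposition \ref{pr3} combined with the change-of-variables gives $\tfrac{d}{dr}\mathcal{T}_{\gamma,\mathsf{e}}(\Omega_r) = \tfrac{1+\gamma}{1-\gamma}\int_{\partial B_\mathsf{e}(o,r)}|\nabla_\mathsf{e} v_r|^2_\mathsf{e}\,dL_\mathsf{e}$. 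Since (\ref{an}) yields $\tfrac{d}{dr}\log\mathcal{T}_{\gamma,\mathsf{e}}(B_\mathsf{e}(o,r)) = 4/[r(1-\gamma)]$, the claim $\tfrac{d}{dr}\Phi_{\gamma,\mathsf{e}}(f;r)\ge 0$ is equivalent to
\[\int_{\partial B_\mathsf{e}(o,r)}|\nabla_\mathsf{e} v_r|^2_\mathsf{e}\,dL_\mathsf{e} \ge \frac{4}{r(1+\gamma)}\int_{B_\mathsf{e}(o,r)}|\nabla_\mathsf{e} v_r|^2_\mathsf{e}\,dA_\mathsf{e}. \qquad (\star)\]

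I establish $(\star)$ by chaining two standard inputs. Proposition \ref{pr2} with $\tau_\mathsf{e}=4\pi$ applied to the simply-connected $\Omega_r\in\mathcal{F}(\mathbb R^2)$, transferred via the conformal identities above, gives
\[\int_{B_\mathsf{e}(o,r)}|\nabla_\mathsf{e} v_r|^2_\mathsf{e}\,dA_\mathsf{e} \le \frac{1+\gamma}{8\pi}\Big(\int_{\partial B_\mathsf{e}(o,r)}|\nabla_\mathsf{e} v_r|_\mathsf{e}\,dL_\mathsf{e}\Big)^2.\]
Cauchy-Schwarz on the circle $\partial B_\mathsf{e}(o,r)$ of length $2\pi r$ gives the complementary bound $\bigl(\int_\partial|\nabla_\mathsf{e} v_r|_\mathsf{e}\,dL\bigr)^2 \le 2\pi r\int_\partial|\nabla_\mathsf{e} v_r|^2_\mathsf{e}\,dL$; chaining the two estimates is precisely $(\star)$, hence the monotonicity of $\Phi_{\gamma,\mathsf{e}}(f;\cdot)$.

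The strict-increase assertion follows from an equality analysis. If $\Phi_{\gamma,\mathsf{e}}(f;\cdot)$ is constant on an open sub-interval of $(0,1)$, then $(\star)$ is an equality there, forcing both Proposition \ref{pr2} and Cauchy-Schwarz to be tight throughout: the former forces each $\Omega_r$ to be a Euclidean disk, so $f$ must be M\"obius on $B_\mathsf{e}(o,1)$; writing $f(z)=(az+b)/(cz+d)$, Cauchy-Schwarz equality requires $|f'|$ constant on each $\partial B_\mathsf{e}(o,r)$, and a direct expansion of $|cz+d|^2 = |c|^2|z|^2 + 2\mathrm{Re}(c\bar d z) + |d|^2$ forces $c\bar d=0$, leaving $c=0$ (since $d=0$ is incompatible with holomorphicity of $f$ on $B_\mathsf{e}(o,1)$), i.e., $f$ is linear. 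The consequence then follows from the monotonicity by letting $r\downarrow 0$ and $r\uparrow 1$, and equality for linear $f$ is clear from (\ref{an}). The main obstacle is conceptual—identifying the right reduction; once one recognizes that Proposition \ref{pr2} combined with Cauchy-Schwarz on the boundary circle is exactly the required Hardy-Pohozaev bound, the argument itself is short.
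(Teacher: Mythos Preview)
Your proof is correct and follows essentially the same route as the paper's: the same dilation-via-$f$ flow feeds Proposition~\ref{pr3} to produce the derivative formula, and Proposition~\ref{pr2} chained with Cauchy--Schwarz on $\partial B_\mathsf{e}(o,r)$ gives the matching upper bound, which together with (\ref{an}) yields $\frac{d}{dr}\Phi_{\gamma,\mathsf{e}}\ge 0$. Your explicit pullback notation $v_r=u_r\circ f$ and the more careful equality analysis (disk image $\Rightarrow$ M\"obius, then $|f'|$ constant on circles $\Rightarrow$ linear) actually clarify points the paper leaves implicit.
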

\begin{proof} The argument for the monotonicity of $\mathcal
Q_{\gamma,\mathsf{e}}(f;r)$ in $(0,1)$ is similar to that proving
\cite[Theorem 1]{CarRa}. The key point is to construct a proper
vector field via the given conformal map $f$. More precisely, if $g$
stands for the inverse map of $f$, then
$$
\xi=\xi(t,w)=f\big((1+{t}{r^{-1}})g(w)\big)\quad\forall\quad w\in
f(B_\mathsf{e}(o,r))
$$
and
$$
\Xi(\xi)=\frac{g(\xi)f'\big(g(\xi)\big)}{r+t}
$$
are selected for (\ref{e3}). Note that the unit outward normal
vector to the boundary $\partial f(B_\mathsf{e}(o,r))$ at $\xi$ is
$$
\nu(\xi)=\Big(\frac{g(\xi)}{r}\Big)\left(\frac{f'\big(g(\xi)\big)}{|f'\big(g(\xi)\big)|_\mathsf{e}}\right)
$$
and so that
$$
\langle\Xi,\nu\rangle_\mathsf{e}=|f'(g(\xi))|_\mathsf{e}\quad\forall\quad
\xi\in \partial f(B_\mathsf{e}(o,r)).
$$

Next, suppose that $u_r$ is the solution of (\ref{eq1}) with
$O=f(B_\mathsf{e}(o,r))$. Then the chain rule yields
$$
|\nabla_\mathsf{e}u_r(\xi)|_\mathsf{e}=\big|\nabla_\mathsf{e}u_r\big(f(z)\big)\big|_\mathsf{e}|f'(z)|_\mathsf{e}\quad\forall\quad
\xi=f(z)\in f(B_\mathsf{e}(o,r)),
$$
whence giving (by Proposition \ref{pr3}):
\begin{equation}\label{eee1}
\frac{d}{dr}\mathcal{T}_{\gamma,\mathsf{e}}\big(f(B_\mathsf{e}(o,r))\big)=\Big(\frac{1+\gamma}{1-\gamma}\Big)\int_{\partial
B_\mathsf{e}(o,r)}|\nabla_\mathsf{e}
u_r|_\mathsf{e}^2\,dL_\mathsf{e}.
\end{equation}
Meanwhile, Proposition \ref{pr2} plus Cauchy-Schwarz's inequality
derives
\begin{equation}\label{eee2}
\mathcal{T}_{\gamma,\mathsf{e}}\big(f(B_\mathsf{e}(o,r))\big)\le\Big(\frac{1+\gamma}{4r^{-1}}\Big)\int_{\partial
B_\mathsf{e}(o,r)}|\nabla_\mathsf{e}
u_r|_\mathsf{e}^2\,dL_\mathsf{e}.
\end{equation}

Finally, putting (\ref{an}), (\ref{eee1}) and (\ref{eee2}) together,
we get that $\frac{d}{dr}\mathcal Q_{\gamma,\mathsf{e}}(f;r)\ge 0$
holds with the strict inequality unless $f$ is linear, whence
reaching the desired result. Since the consequence part is
straightforward, our proof is complete.

\end{proof}

\begin{rem} Lemma \ref{pr4} can be extended to a slightly general form:
For a holomorphic map $f$ from $B_\mathsf{e}(o,1)$ into $\mathbb
R^2$, let ${\bf f}(B_\mathsf{e}(o,r))$ be its Riemann surface with
constant Gauss curvature $-1$. Then
$$
r\mapsto\frac{\mathcal{T}_{\gamma,\mathsf{e}}\big({\bf
f}(B_\mathsf{e}(o,r))\big)}{\mathcal{T}_{\gamma,\mathsf{e}}(B_\mathsf{e}(o,r))}
$$
is strictly increasing in $(0,1)$ unless $f$ is linear. This is in
contrast to \cite[Corollary 2]{CarRa} which reads as:
$$
r\mapsto\frac{\Lambda_{\mathsf{e}}\big({\bf
f}(B_\mathsf{e}(o,r))\big)}{\Lambda_{\mathsf{e}}(B_\mathsf{e}(o,r))}
$$
is strictly decreasing in $(0,1)$ unless $f$ is linear.
\end{rem}

\end{document}